\documentclass[12pt]{amsart}
\usepackage{amssymb, amsmath}
\title{ Dehn functions and the space of marked groups}
\author{M. Shahryari}
\address{M. Shahryari: Department of Pure Mathematics,  Faculty of Mathematical
Sciences, University of Tabriz, Tabriz, Iran}
\email{mshahryari@tabrizu.ac.ir}

\markright{\protect Limit}
\pagestyle{myheadings}

\newtheorem {theorem}{Theorem}

\begin{document}

\maketitle
\begin{abstract}
In the space of marked group, we suppose that a sequence $(G_i, X_i)$ converges to $(G,X)$, where $G$ is finitely presented. We obtain an inequality which connects Dehn functions of $G_i$s and $G$. 
\end{abstract}

{\bf AMS Subject Classification} Primary 20F65, Secondary 20F67\\
{\bf Keywords} the space of marked groups; Gromov-Grigorchuk metric, finitely presented groups;  Dehn functions.

\vspace{1cm}

In the space of marked groups, consider the situation a sequence $(G_i, X_i)$  in which  converges to a finitely presented marked group $(G, X)$. What can we say about the relation between corresponding Dehn functions of the groups $G_i$ and $G$? Suppose $\Gamma_i=\langle X_i|R_i\rangle$ is an arbitrary presentation for $G_i$ and $\Gamma=\langle X|R\rangle$ is an arbitrary finite presentation for $G$. Let $L=\max_{r\in R}\|r\|$. We will prove that  for any $n\geq 0$,
$$
\limsup_i\frac{\delta_i(n)}{\delta_i(L)}\leq \delta(n).
$$
Here of course, $\delta_i$ is the Dehn function of $G_i$ corresponding to $\Gamma_i$. Also $\delta$ is the Dehn function of $G$ corresponding to a finite presentation $\Gamma$. As a result, it shows that if the set $\{ \delta_i(L):\ i\geq 1\}$ is finite, then so is the set $\{ \delta_i(n):\ i\geq 1\}$, for all $n\geq 0$.

\section{Basic notions}
The idea of Gromov-Grigorchuk metric on the space of finitely generated groups is proposed by M. Gromov in his celebrated solution to the Milnor's conjecture on the groups with polynomial growth (see \cite{Gromov}). For a detailed discussion of this metric, the reader can consult \cite{Cham}. Here, we give some necessary basic definitions. A marked group $(G,X)$ consists of a group $G$ and an $m$-tuple of its elements $X=(x_1, \ldots, x_m)$ such that $G$ is generated by $X$. Two marked groups $(G, X)$ and $(G^{\prime}, X^{\prime})$ are {\em the same}, if there exists an isomorphism $G\to G^{\prime}$ sending any $x_i$ to $x^{\prime}_i$. The set of all such marked groups is denoted by $\mathcal{G}_m$. This set can be identified by the set of all normal subgroup of the free group $\mathbb{F}=\mathbb{F}_m$. Since the later is a closed subset of the compact topological space $2^{\mathbb{F}}$ (with the product topology), so it is also a compact space. This space is in fact metrizable: let $B_{\lambda}$ be the closed ball of radius $\lambda$  in $\mathbb{F}$ (having the identity as the center) with respect to its word metric. For any two normal subgroups $N$ and $N^{\prime}$, we say that they are in distance at most $e^{-{\lambda}}$, if $B_{\lambda}\cap N=B_{\lambda}\cap N^{\prime}$. So, if $\Lambda$ is the largest of such numbers, then  we can define
$$
d(N, N^{\prime})=e^{-\Lambda}.
$$
This induces a corresponding metric on $\mathcal{G}_m$. To see what is this metric exactly, let $(G, X)$ be a marked group. For any non-negative integer $\lambda$, consider the set of {\em relations} of $G$ with length at most $\lambda$, i.e.
$$
\mathrm{Rel}_{\lambda}(G, X)=\{ w\in \mathbb{F}: \| w\|\leq \lambda, w_G=1\}.
$$
Then $d((G, X),(G^{\prime}, X^{\prime}))=e^{-\Lambda}$, where $\Lambda$ is the largest number such that $\mathrm{Rel}_{\Lambda}(G,X)=\mathrm{Rel}_{\Lambda}(G^{\prime}, X^{\prime})$. Note that we identify here $X$ and $X^{\prime}$ using the correspondence $x_i\to x^{\prime}_i$. This metric on $\mathcal{G}_m$ is the so called Gromov-Grigorchuk metric.

Many topological properties of the space $\mathcal{G}_m$ is discussed in \cite{Cham}. In this note, we need just one elementary fact: any finitely presented marked group $(G, X)$ in $\mathcal{G}_m$ has a neighborhood, every element in which is a quotient of $G$. \\

We also need to review some basic notions concerning {\em Dehn}  and {\em isoperimtry}  functions. Let $\langle X|R\rangle$ be a presentation for a finitely generated group $G$ ($X$ is finite).  Let $w\in \mathbb{F}=F(X)$ be a word such that $w_G=1$. Clearly in this case $w$ belongs to $\langle R^{\mathbb{F}}\rangle$, the normal closure of $R$ in $\mathbb{F}$. Hence, we have
$$
w=\prod_{i=1}^ku_ir_i^{\pm 1}u_i^{-1},
$$
for some elements $u_1, \ldots, u_k\in \mathbb{F}$ and $r_1, \ldots, r_k\in R$. The smallest possible $k$ is called the {\em area} of $w$ and it is denoted by $\mathrm{Area}_R(w)$. A function $f:\mathbb{N}\to \mathbb{R}$ is an {\em isoperimetric} function for the given presentation, if for all $w\in \mathbb{F}$, with $w_G=1$, we have
$$
\mathrm{Area}_R(w)\leq f(\| w\|).
$$
The corresponding Dehn function is the smallest isoperimetric function, i.e.
$$
\delta(n)=\max \{ \mathrm{Area}_R(w): w\in \mathbb{F}, w_G=1, \|w\|\leq n\}.
$$
This function measures the complexity of the word problem in the case of finitely presented group $G$: the word problem for the presentation $\langle X|R\rangle$ is solvable, if and only if, the corresponding Dehn function is recursive. In fact the recursive Dehn functions measures the time complexity of fastest non-deterministic Turing machine solving word problem of $G$ (see \cite{Gres} and \cite{Sapir}).  Also in the case of finitely presented groups, the {\em type} of Dehn function is a quasi-isometry invariant of groups. Although, in this note, we use the exact values of Dehn function, we give the definition of {\em type}.  Let $f, g:\mathbb{N}\to \mathbb{N}$ be two arbitrary functions. We say that $f$ is {\em dominated} by $g$, if there exists a positive number $C$, such that for all $n$,
$$
f(n)\leq Cg(Cn+C)+Cn+C.
$$
We denote the domination by $f\preceq g$. These two functions are said to be equivalent, if $f\preceq g$ and $g\preceq f$. The type of a Dehn function is its equivalence class with respect to this relation. Two Dehn functions of a fixed group with respect to different finite presentations have the same type. There are many classes of finitely presented groups having Dehn functions of type $n^{\alpha}$ for a dense set of exponents $\alpha\geq 2$ (see \cite{Brad}). Hyperbolic groups are the only groups having linear type Dehn functions. Olshanskii proved that there is no group with Dehn function of type $n^{\alpha}$ with $1< \alpha <2$ (see \cite{Olsh}). For a study of Dehn functions of non-finitely presented groups, the reader can consult \cite{Gri}.

\section{Main results}
We work within the space of marked groups $\mathcal{G}_m$.

\begin{theorem}
Let $(G_i, X_i)$ be a sequence converging to $(G, X)$, where $G$ is finitely presented. Then for any $n$, we have
$$
\limsup_i\frac{\delta_i(n)}{\delta_i(L)}\leq \delta(n),
$$
where $\delta_i$ is any Dehn function of $G_i$ and $\delta$ is the  Dehn function of $G$ corresponding to any finite presentation $\Gamma=\langle X|R\rangle$ and $L=\max_{r\in R}\|r\|$.
\end{theorem}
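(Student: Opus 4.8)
The plan is to pass to normal subgroups and exploit the convergence at two different radii simultaneously. Write $\mathbb{F}=F(X)$, let $N\trianglelefteq\mathbb{F}$ be the kernel of $\mathbb{F}\to G$ (so $N=\langle R^{\mathbb{F}}\rangle$, since $\Gamma=\langle X\mid R\rangle$ is a presentation), and let $N_i\trianglelefteq\mathbb{F}$ be the kernel of $\mathbb{F}\to G_i$. Fix $n$ and set $\lambda=\max(n,L)$. By the definition of the Gromov--Grigorchuk metric, the convergence $(G_i,X_i)\to(G,X)$ gives an index $i_0$ such that $B_\lambda\cap N_i=B_\lambda\cap N$ for all $i\geq i_0$. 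Two consequences are needed. First, since every $r\in R$ satisfies $\|r\|\leq L\leq\lambda$ and $r\in N$, we get $R\subseteq B_\lambda\cap N=B_\lambda\cap N_i\subseteq N_i$; as $N_i$ is normal this forces $N=\langle R^{\mathbb{F}}\rangle\subseteq N_i$, i.e. $G_i$ is a quotient of $G$ (this is exactly the elementary neighborhood fact recalled in Section 1). Second, any $w$ with $\|w\|\leq n\leq\lambda$ and $w_{G_i}=1$ lies in $B_\lambda\cap N_i=B_\lambda\cap N$, hence $w_G=1$ as well.

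Next comes the rewriting step, which is the heart of the argument. Fix $i\geq i_0$ and take any $w$ with $\|w\|\leq n$ and $w_{G_i}=1$. By the second consequence above, $w_G=1$, so the definition of $\delta$ lets me write
$$
w=\prod_{j=1}^{k}u_j r_j^{\pm 1}u_j^{-1}, \qquad r_j\in R,\ u_j\in\mathbb{F},
$$
with $k=\mathrm{Area}_R(w)\leq\delta(\|w\|)\leq\delta(n)$, using that $\delta$ is non-decreasing. Now each $r_j\in R\subseteq N_i$, so $(r_j)_{G_i}=1$, and $\|r_j\|\leq L$; hence $\mathrm{Area}_{R_i}(r_j)\leq\delta_i(\|r_j\|)\leq\delta_i(L)$, and I may expand
$$
r_j=\prod_{l}v_{j,l}\, s_{j,l}^{\pm 1}\, v_{j,l}^{-1}, \qquad s_{j,l}\in R_i,
$$
as a product of at most $\delta_i(L)$ conjugates of relators from $R_i$. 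Substituting these expansions into the expression for $w$ and absorbing each $u_j$ into the conjugating elements (reversing order and signs in the factors when the exponent on $r_j$ is $-1$) exhibits $w$ as a product of conjugates of elements of $R_i$. Counting factors gives $\mathrm{Area}_{R_i}(w)\leq\sum_{j=1}^{k}\delta_i(L)=k\,\delta_i(L)\leq\delta(n)\,\delta_i(L)$.

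Finally, taking the maximum over all admissible $w$ yields $\delta_i(n)\leq\delta(n)\,\delta_i(L)$ for every $i\geq i_0$. Dividing by $\delta_i(L)$ (which is $\geq 1$ as soon as $R$ contains a relator nontrivial in $\mathbb{F}$, the only case of interest; the degenerate cases are immediate) and passing to the limit superior gives $\limsup_i\delta_i(n)/\delta_i(L)\leq\delta(n)$, as claimed. The one point demanding genuine care is the interplay of the two radii: the ``quotient'' conclusion only needs convergence at radius $L$, but to guarantee that a length-$\leq n$ relation of $G_i$ is genuinely a relation of $G$ --- so that $\delta$ can be applied to it at all --- I must invoke convergence at radius $n$ too, which is why the single threshold $\lambda=\max(n,L)$ is chosen. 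The substitution and area-counting bookkeeping is then routine once the $\pm 1$ exponents are handled.
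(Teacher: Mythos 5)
Your proposal is correct and follows essentially the same route as the paper: pass to a tail where each $G_i$ is a quotient of $G$ and where length-$\leq n$ relations of $G_i$ and $G$ coincide, then substitute an $R_i$-expansion of each relator $r\in R$ into an area-minimal $R$-expression for $w$ to get $\mathrm{Area}_{R_i}(w)\leq \mathrm{Area}_R(w)\cdot\delta_i(L)$. The only cosmetic differences are that you bound each $\mathrm{Area}_{R_i}(r_j)$ directly by $\delta_i(L)$ where the paper routes through the intermediate constant $K_i=\max_{r\in R}\mathrm{Area}_{R_i}(r)$ with $K_i\leq\delta_i(L)$ at the end, and that you explicitly note the degenerate case $\delta_i(L)=0$, which the paper leaves implicit.
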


\begin{proof}
As $G$ is finitely presented, we may assume that all $G_i$ is a quotient of $G$. We also identify $X_i$ by $X$ using the obvious correspondence. Let $\mathbb{F}=F(X)$ be the free group on $X$ and assume that $w\in \mathbb{F}$. Suppose that $w_G=1$ and $l=\mathrm{Area}_R(w)$. Then we have
$$
w=\prod_{j=1}^l a_jr_j^{\pm 1}a_j^{-1},
$$
where $a_1, \ldots, a_l\in \mathbb{F}$ and $r_1, \ldots, r_l\in R$. We know that $(r_j)_{G_i}=1$, for all $i$ and $j$, hence
$$
r_j=\prod_{t_j=1}^{l_{ij}}u_{it_j}r_{it_j}^{\pm 1}u_{it_j}^{-1},
$$
where $l_{ij}=\mathrm{Area}_{R_i}(r_j)$,  $r_{i1}, \ldots, r_{il_{ij}}\in R_i$ and $u_{i1}, \ldots, u_{il_{ij}}\in \mathbb{F}$. Therefore, we have
\begin{eqnarray*}
w&=& \prod_{j=1}^l a_j(\prod_{t_j=1}^{l_{ij}}u_{it_j}r_{it_j}^{\pm 1}u_{it_j}^{-1})^{\pm 1}a_j^{-1}\\
 &=& \prod_{j=1}^l\prod_{t_j=1}^{l_{ij}}a_j u_{it_j}r_{it_j}^{\mp 1}u_{it_j}^{-1}a_j^{-1}.
\end{eqnarray*}
This shows that
$$
\mathrm{Area}_{R_i}(w)\leq \sum_{j=1}^ll_{ij}=\sum_{j=1}^l\mathrm{Area}_{R_i}(r_j).
$$
Suppose $K_i=\max_{r\in R}(\mathrm{Area}_{R_i}(r))$. Then, we have
$$
(\ast)\ \ \ \mathrm{Area}_{R_i}(w)\leq K_i\cdot \mathrm{Area}_R(w).
$$
Now, let $n\geq 1$. There exists an integer $i_0$ such that for any $i\geq i_0$, we have
$$
d((G_i, X_i), (G, X))\leq e^{-n}.
$$
This shows that  $\mathrm{Rel}_n(G_i, X_i)=\mathrm{Rel}_n(G, X)$, for $i\geq i_0$. In other words
$$
\{ w\in \mathbb{F}:\ \|w\|\leq n, w_{G_i}=1\}=\{w\in \mathbb{F}:\ \|w\|\leq n, w_G=1\}.
$$
By $(\ast)$ and by the definition of Dehn function, we conclude $\delta_i(n)\leq K_i\cdot \delta(n)$. Hence, for $i\geq i_0$, we have
$$
\frac{\delta_i(n)}{K_i}\leq \delta(n),
$$
and therefore
$$
\sup_{i\geq i_0}\frac{\delta_i(n)}{K_i}\leq \delta(n).
$$
For any $j$, define
$$
a_j(n)= \sup_{i\geq j}\frac{\delta_i(n)}{K_i}\leq \delta(n),
$$
which a decreasing sequence in $j$. Since $a_{i_0}(n)\leq \delta(n)$, so $\lim_j a_j(n)\leq \delta(n)$. This shows that
$$
\limsup_i\frac{\delta_i(n)}{K_i}\leq \delta(n).
$$
Now, note that
\begin{eqnarray*}
K_i&=&\max_{r\in R}\mathrm{Area}_{R_i}(r)\\
   &\leq&\max_{r\in R, \|r\|=\|w\|}\mathrm{Area}_{R_i}(w)\\
   &\leq&\delta_i(L).
\end{eqnarray*}
This completes the proof.
\end{proof}

As a result, we see that if the set $\{ \delta_i(L):\ i\geq 1\}$ is finite, then so is the set $\{ \delta_i(n):\ i\geq 1\}$, for all $n\geq 0$. This is because, if we put $M=\max_i\delta_i(L)$, then
$$
\limsup_i \delta_i(n)\leq M\cdot \delta(n).
$$
Now, if the second set is infinite, then the sequence $(\delta_i(n))_i$ has a divergent subsequence, which is a contradiction.

\end{document}